\let\OLDthebibliography\thebibliography
\renewcommand\thebibliography[1]{
  \OLDthebibliography{#1}
  \setlength{\parskip}{0pt}
  \setlength{\itemsep}{0pt plus 0.3ex}
}
\def\numberlikeadb{\global\def\theequation{\thesection.\arabic{equation}}}
\newtheorem{theorem}{Theorem}[section]
\newtheorem{lemma}[theorem]{Lemma}
\newtheorem{proposition}[theorem]{Proposition}
\newtheorem{remark}[theorem]{Remark}
\begin{document}

\title{Functional inequalities and monotonicity results for modified Lommel functions of the first kind}
\author{Robert E. Gaunt\footnote{Department of Mathematics, The University of Manchester, Oxford Road, Manchester M13 9PL, UK}}

\date{\today} 
\maketitle

\vspace{-5mm}

\begin{abstract}We establish some monotonicity results and functional inequalities for modified Lommel functions of the first kind.  In particular, we obtain new Tur\'{a}n type inequalities and bounds for ratios of modified Lommel functions of the first kind, as well as the function itself.  These results complement and in some cases improve on existing results, and also generalise a number of the results from the literature on monotonicity patterns and functional inequalities for the modified Struve function of the first kind.
\end{abstract}

\noindent{{\bf{Keywords:}}} Modified Lommel function; bounds; Tur\'{a}n type inequalities; monotonicity
%; modified Struve function

\noindent{{{\bf{AMS 2010 Subject Classification:}}} Primary 33C20; 26D07

\section{Introduction}\label{intro}

The modified Lommel function of the first kind $t_{\mu,\nu}(x)$ is a particular solution to the modified Lommel differential equation \cite{g29}. Modified lommel functions arise in scattering amplitudes in quantum optics \cite{t73}, stress distributions in cylindrical objects \cite{s85} and the physics of two-dimensional diffusions \cite{t56} and heat conduction \cite{b61}.  The modified Lommel function $t_{\mu,\nu}(x)$ generalises the modified Strive function of the first kind $\mathbf{L}_\nu(x)$ (see Section \ref{sec2}), which also arises in manifold applications; see \cite{bp13} for a list of application areas. 

Over the last several decades an extensive literature has built up on monotonicity results and functional inequalities for Bessel, modified Bessel and related functions motivated through problems in the applied sciences (see, for example, \cite{baricz2, segura} and references therein); however, only recently have such results started to be obtained for modified Lommel functions.  In \cite{m17} some monotonicity properties and convexity results for the modified Lommel function of the first kind $t_{\mu,\nu}(x)$ were obtained, from which some Tur\'{a}n type inequalities followed. A Redheffer type bound for the function $t_{\mu-1/2,1/2}(x)$ was also obtained by \cite{m17}, and \cite{css18} established accurate bounds for $t_{\mu,\nu}(x)$, although these bounds only hold for $0<x<1$.  In a very recent work, \cite{gaunt ineq7} extended results of \cite{gaunt ineq5} concerning functional inequalities for modified Struve functions of the first kind to obtain bounds for the important quantities $t_{\mu,\nu}(x)/t_{\mu-1,\nu-1}(x)$, $xt_{\mu,\nu}'(x)/t_{\mu,\nu}(x)$, $t_{\mu,\nu}(x)/t_{\mu,\nu}(y)$ and the function $t_{\mu,\nu}(x)$ itself in terms of analogous expressions involving the modified Bessel functions of the first kind $I_\nu(x)$.  These results are quite powerful because there is substantial literature on functional inequalities for modified Bessel functions from which one can draw suitable bounds (see \cite{baricz2, gaunt ineq5, segura} and references therein).  

Our aim in this paper is to further contribute to the recent literature on functional inequalities and monotonicity properties of modified Lommel functions of the first kind, as well as to generalise existing results for the modified Struve function of the first kind.  In Section \ref{sec3.1}, we generalise results from the comprehensive study of monotonicity properties and functional inequalities for the modified Struve function $\mathbf{L}_\nu(x)$ given by \cite{bp14}, which in turn complemented and improved results of \cite{jn98}.  As the functions $t_{\mu,\nu}(x)$ and $\mathbf{L}_\nu(x)$ share a similar power series representation (see Section \ref{sec2} for these and further properties), the approach of \cite{bp14}, which involves appealing to general results on the monotonicity of quotients of power series, is also effective for our purpose of modified Lommel functions of the first kind, and we are also able to obtain some other monotonicity results and functional inequalities, which complement results of \cite{gaunt ineq7}. We note that a few of the monotonicity and convexity results of \cite{bp14} have already been generalised by \cite{m17}.  In addition, in Section \ref{sec3.2}, we obtain new Tur\'{a}n type inequalities for the modified Lommel function of the first kind, which complement a different type of Tur\'{a}n type inequalities for $t_{\mu,\nu}(x)$ that were obtained by \cite{m17}.  One of our Tur\'{a}n type inequalities generalises one of \cite{bp14} given for the modified Struve function $\mathbf{L}_\nu(x)$, whilst our other two-sided inequality gives a new Tur\'{a}n type inequality in the special case of the modified Struve function $\mathbf{L}_\nu(x)$, and therefore complements the results of \cite{bps17} and \cite{bb16}.  We also note that our Tur\'{a}n type inequalities naturally complement those of \cite{bk16} that were given for a certain type of Lommel function of the first kind.

%Despite modified Lommel functions of the first kind being in some ways more complicated to work with than modified Struve functions of the first, such as having a more complicated integral representation (see \cite{s36}), the two functions have, amongst other similar properties, a similar power series representation.  As such, the approach of \cite{bp14} of using general results on the monotonicity of quotients of power series to establish monotonicity results and associated functional inequalities for the modified Struve function $\mathbf{L}_\nu(x)$ is also applicable to the study of the modified Lommel function $t$

%One downside to this approach, however, is that it introduces two sources of error in the bound; firstly, the approximation of the expression involving $t_{\mu,\nu}(x)$ by the analogous quantity involving $I_\nu(x)$ and secondly the approximation of that quantity.

\section{The modified Lommel function of the first kind}\label{sec2}

The modified Lommel function of the first kind $t_{\mu,\nu}(x)$ is defined by the hypergeometric series
\begin{align}\label{hypt}t_{\mu,\nu}(x)=2^{\mu-1}\Gamma\big(\tfrac{\mu-\nu+1}{2}\big)\Gamma\big(\tfrac{\mu+\nu+1}{2}\big)\sum_{k=0}^\infty\frac{(\frac{1}{2}x)^{\mu+2k+1}}{\Gamma\big(k+\frac{\mu-\nu+3}{2}\big)\Gamma\big(k+\frac{\mu+\nu+3}{2}\big)},\nonumber
\end{align}
and arises as a particular solution of the modified Lommel differential equation \cite{s36,r64}
\begin{equation}\label{lommelode}x^2f''(x)+xf'(x)-(x^2+\nu^2)f(x)=x^{\mu+1}.
\end{equation}
In the literature different notation is used for the modified Lommel functions; we use the notation of \cite{zs13}.  The terminology modified Lommel function of the \emph{first kind} is also not standard, but has recently been introduced by \cite{gaunt ineq7}. 

We shall follow \cite{gaunt ineq7} and use the following normalization which will be useful for our purposes, as it will remove a number of multiplicative constants from our calculations:
\begin{align}\label{tseries}\tilde{t}_{\mu,\nu}(x)&=\frac{1}{2^{\mu-1}\Gamma\big(\frac{\mu-\nu+1}{2}\big)\Gamma\big(\frac{\mu+\nu+1}{2}\big)}t_{\mu,\nu}(x)\\
&=\sum_{k=0}^\infty\frac{(\frac{1}{2}x)^{\mu+2k+1}}{\Gamma\big(k+\frac{\mu-\nu+3}{2}\big)\Gamma\big(k+\frac{\mu+\nu+3}{2}\big)}.\nonumber
\end{align}
To ease the exposition, we will also refer to $\tilde{t}_{\mu,\nu}(x)$ as the modified Lommel function of the first kind.  From now on, we shall work with the function $\tilde{t}_{\mu,\nu}(x)$; results for $t_{\mu,\nu}(x)$ can be easily inferred.  We note the important special case
\begin{equation*}\tilde{t}_{\nu,\nu}(x)= \mathbf{L}_\nu(x),
\end{equation*}
where $\mathbf{L}_\nu(x)$ is a modified Struve function of the first kind.

%With the exception of the differentiation formulas (\ref{diffone}) and (\ref{diffone1}) (see \cite{r64} and \cite{g20}, respectively),  the following basic properties can all be found in \cite{gaunt ineq7}.  
For $x>0$, the function $ \tilde{t}_{\mu,\nu}(x)$ is positive if $\mu-\nu\geq-3$ and $\mu+\nu\geq-3$ (equivalently $\mu\geq-3$ and $|\nu|\leq\mu+3$). The function $ \tilde{t}_{\mu,\nu}(x)$ satisfies the following recurrence relations \cite{gaunt ineq7} and differentiation formula \cite{g20}:
\begin{align}\label{Iidentity}\tilde{t}_{\mu-1,\nu-1}(x)-\tilde{t}_{\mu+1,\nu+1}(x)&=\frac{2\nu}{x}\tilde{t}_{\mu,\nu}(x)+a_{\mu,\nu}(x), \\
\label{lomrel2}\tilde{t}_{\mu-1,\nu-1}(x)+\tilde{t}_{\mu+1,\nu+1}(x)&=2\tilde{t}_{\mu,\nu}'(x)-a_{\mu,\nu}(x), \\
\label{diffone1}\frac{\mathrm{d}}{\mathrm{d}x}\bigg(\frac{\tilde{t}_{\mu,\nu} (x)}{x^\nu}\bigg)&=
\frac{\tilde{t}_{\mu+1,\nu+1} (x)}{x^\nu}+\frac{a_{\mu,\nu}(x)}{x^\nu},
\end{align}
where 
\begin{equation}\label{anot}a_{\mu,\nu}(x)=\frac{(x/2)^\mu}{\Gamma(\frac{\mu-\nu+1}{2})\Gamma(\frac{\mu+\nu+3}{2})}.
\end{equation} 
It will be also useful to follow \cite{gaunt ineq7} and introduce the following function:
%It will be also useful to follow \cite{gaunt ineq7} and introduce the following function.  For $\mu>-2$ and $|\nu+1|<\mu+2$, we define $x\mapsto b_{\mu,\nu}(x):(0,\infty)\rightarrow(0,\frac{1}{2}(\mu-\nu+1))$ by
\begin{equation*}b_{\mu,\nu}(x):=\frac{xa_{\mu,\nu}(x)}{2 \tilde{t}_{\mu,\nu}(x)}=\frac{(\frac{1}{2}x)^{\mu+1}}{\Gamma\big(\frac{\mu-\nu+1}{2}\big)\Gamma\big(\frac{\mu+\nu+3}{2}\big) \tilde{t}_{\mu,\nu}(x)}.
\end{equation*}
The function $ \tilde{t}_{\mu,\nu}(x)$ has the following asymptotic properties  \cite{gaunt ineq7}:
\begin{align}\label{ttend0} \tilde{t}_{\mu,\nu}(x)&\sim\frac{(\frac{1}{2}x)^{\mu+1}}{\Gamma\big(\frac{\mu-\nu+3}{2}\big)\Gamma\big(\frac{\mu+\nu+3}{2}\big)}, \quad x\downarrow0,\:\mu>-3,\:|\nu|<\mu+3, \\
\label{Itendinfinity} \tilde{t}_{\mu,\nu}(x)&\sim\frac{\mathrm{e}^x}{\sqrt{2\pi x}}\bigg(1-\frac{4\nu^2-1}{8x}\bigg), \quad x\rightarrow\infty, \:\mu,\nu\in\mathbb{R}.
\end{align}

We end this section by recording that the modified Bessel function of the first kind and modified Struve function of the first kind are defined by the power series \cite{olver}
\begin{align}\label{Idef}I_\nu(x)&=\sum_{k=0}^\infty\frac{(\frac{1}{2}x)^{2k+\nu}}{k!\Gamma(k+\nu+1)}, \\
\label{Ldef} \mathbf{L}_\nu(x)&=\sum_{k=0}^\infty\frac{(\frac{1}{2}x)^{2k+\nu+1}}{\Gamma(k+\frac{3}{2})\Gamma(k+\nu+\frac{3}{2})}.
\end{align}
The functions $I_\nu(x)$ and $\mathbf{L}_\nu(x)$ have the following asymptotic behaviour:
\begin{align}\label{itend0}I_\nu(x)&\sim \frac{x^\nu}{2^\nu\Gamma(\nu+1)}, \quad x\downarrow0,\:\nu\not=-1,-2,-3\ldots,\\
\label{Ltend0}\mathbf{L}_\nu(x)&\sim \frac{x^{\nu+1}}{\sqrt{\pi}2^\nu\Gamma(\nu+\frac{3}{2})}, \quad x\downarrow0,\:\nu\not=-\tfrac{3}{2},-\tfrac{5}{2},-\tfrac{7}{2}\ldots, \\
\label{itendinf}I_\nu(x)&\sim\frac{\mathrm{e}^x}{\sqrt{2\pi x}}, \quad \mathbf{L}_\nu(x)\sim\frac{\mathrm{e}^x}{\sqrt{2\pi x}}, \quad x\rightarrow\infty,\:\nu\in\mathbb{R}.
\end{align}
%(see \cite{olver} for these and further properties).

\section{Main results and proofs}\label{sec3}

\subsection{Monotonicity results and associated inequalities}\label{sec3.1}

In the sequel we shall need the following result (see \cite{bk55,pv97}).  

\begin{lemma}\label{lem2.1}Suppose the power series $f(x) =\sum_{n\geq0}a_nx^n$ and $g(x) =\sum_{n\geq0}b_nx^n$, where $a_n \in\mathbb{R}$ and $b_n > 0$ for all $n\geq0$, both converge on $(-r, r)$, $r > 0$. If $\{a_n/b_n\}_{n\geq0}$ is an increasing (decreasing) sequence, then the function $x \mapsto f(x)/g(x)$ is also increasing (decreasing) on $(0, r)$.
\end{lemma}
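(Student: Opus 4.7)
The plan is to analyse the sign of $h'(x)$ where $h(x) = f(x)/g(x)$. Since $b_n > 0$ for all $n \geq 0$, the series $g(x) = \sum_{n \geq 0} b_n x^n$ is strictly positive on $(0,r)$, so $h$ is well defined and differentiable there with
\begin{equation*}
h'(x) = \frac{f'(x)g(x) - f(x)g'(x)}{g(x)^2}.
\end{equation*}
It thus suffices to study the sign of $N(x) := f'(x)g(x) - f(x)g'(x)$, and because the hypotheses are about the coefficient sequence, I would try to express $N(x)$ itself as a power series and inspect the sign of its coefficients.

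The key computation is the Cauchy product. Writing $f'(x) = \sum_{i \geq 1} i a_i x^{i-1}$ and $g'(x) = \sum_{j \geq 1} j b_j x^{j-1}$ and collecting the coefficient of $x^n$, one obtains
\begin{equation*}
[x^n] N(x) = \sum_{\substack{i+j = n+1 \\ i \geq 1,\, j \geq 0}} i\, a_i b_j \;-\; \sum_{\substack{i+j = n+1 \\ i \geq 0,\, j \geq 1}} j\, a_i b_j.
\end{equation*}
The main step is to symmetrise this expression by pairing $(i,j)$ with $(j,i)$; after cancellation one arrives at
\begin{equation*}
[x^n] N(x) = \sum_{\substack{i > j \geq 0 \\ i+j = n+1}} (i-j)\,(a_i b_j - a_j b_i) = \sum_{\substack{i > j \geq 0 \\ i+j = n+1}} (i-j)\, b_i b_j \left( \frac{a_i}{b_i} - \frac{a_j}{b_j} \right).
\end{equation*}

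If $\{a_n/b_n\}_{n \geq 0}$ is increasing, every summand is nonnegative (since $i > j$ implies $a_i/b_i \geq a_j/b_j$ and $b_i b_j > 0$), so $[x^n] N(x) \geq 0$ for all $n \geq 0$. Therefore $N(x) \geq 0$ on $(0,r)$, giving $h'(x) \geq 0$ and the claimed monotonicity. The decreasing case follows from exactly the same identity, with every summand now nonpositive. The only real obstacle is the bookkeeping in the symmetrisation step that rewrites the unsigned double sum as a manifestly signed sum of $(a_i b_j - a_j b_i)$ terms; once that identity is in place, the conclusion is immediate from the hypothesis on the ratios $a_n/b_n$.
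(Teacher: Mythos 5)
Your proof is correct. Note that the paper itself does not prove Lemma \ref{lem2.1} at all: it is quoted as a known result with citations to Biernacki--Krzy\.{z} and Ponnusamy--Vuorinen, and the argument you give (form $N(x)=f'(x)g(x)-f(x)g'(x)$, extend the index ranges so that $[x^n]N(x)=\sum_{i+j=n+1}(i-j)a_ib_j$ since the boundary terms vanish, then symmetrise to get $\sum_{i>j\geq0,\,i+j=n+1}(i-j)b_ib_j(a_i/b_i-a_j/b_j)$) is precisely the classical proof from those sources, with the bookkeeping carried out correctly. The only point worth adding is that when the ratio sequence is \emph{strictly} monotone, at least one coefficient of $N$ is strictly positive (negative), so $N(x)>0$ ($<0$) for $x\in(0,r)$; this gives the strict monotonicity that the paper actually invokes in Theorem \ref{thm1}.
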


For our purposes, it is important to note that Lemma \ref{lem2.1} also holds when both the power series $f(x)$ and $g(x)$ are even, or both are odd functions.

\begin{theorem}\label{thm1}The following assertions are true:

\begin{itemize}

\item[(i)] If $-\frac{3}{2}<\nu<\mu$ ($|\nu|<\mu+3$ for $-3<\mu\leq-\frac{3}{2}$, and $\mu<\nu<\mu+3$ for $\mu>-\frac{3}{2}$), then the function $x\mapsto x^{\mu-\nu}\mathbf{L}_\nu(x)/\tilde{t}_{\mu,\nu}(x)$ is strictly increasing (decreasing) on $(0,\infty)$.  

\item[(ii)] If $\mu>-\frac{3}{2}$ and $|\mu|>|\nu|$ ($\mu>-\frac{3}{2}$ and $|\mu|<|\nu|<\mu+3$), then the function $x\mapsto \mathbf{L}_\mu(x)/\tilde{t}_{\mu,\nu}(x)$ is strictly increasing (decreasing) on $(0,\infty)$.

\item[(iii)] If $\mu>-2$ and $|\nu|<|\mu+1|$ ($\mu>-2$ and $|\mu+1|<|\nu|<\mu+3$), then the function $x\mapsto I_{\mu+1}(x)/\tilde{t}_{\mu,\nu}(x)$ is strictly increasing (decreasing) on $(0,\infty)$.

\item[(iv)] If $\mu\geq\mu_1>-3$, $|\nu|<\mu+3$, $|\nu_1|<\mu_1+3$ and $(\mu-\mu_1)(\mu+\mu_1+6)\geq\nu^2-\nu_1^2$ ($\mu_1\geq\mu>-3$, $|\nu|<\mu+3$, $|\nu_1|<\mu_1+3$ and $(\mu_1-\mu)(\mu_1+\mu+6)\geq\nu^2-\nu_1^2$), then the function $x\mapsto x^{\mu_1-\mu}\tilde{t}_{\mu,\nu}(x)/\tilde{t}_{\mu_1,\nu_1}(x)$ is increasing (decreasing) on $(0,\infty)$.  If either of the weak inequalities is strict, then the function is strictly increasing (decreasing).

\item[(v)] If $\mu>-3$ and $|\nu|<\mu+3$, then the function $x\mapsto x\tilde{t}_{\mu,\nu}'(x)/\tilde{t}_{\mu,\nu}(x)$ is strictly increasing on $(0,\infty)$.

\item[(vi)] Fix $\mu$ and $\nu$ such that $\mu>-3$, $|\nu|<\mu+3$. Then $\delta\mapsto \tilde{t}_{\mu+\delta+1,\nu+\delta+1}/\tilde{t}_{\mu+\delta,\nu+\delta}(x)$ is a strictly decreasing function on $(0,\infty)$.

\item[(vii)] Let $x>0$.  Then
\begin{equation}\label{monineq1}\tilde{t}_{\mu,\nu}(x)<\frac{\sqrt{\pi}2^{\nu-\mu-1}\Gamma(\nu+\frac{3}{2})}{\Gamma(\frac{\mu-\nu+3}{2})\Gamma(\frac{\mu+\nu+3}{2})}x^{\mu-\nu}\mathbf{L}_\nu(x), \quad -\tfrac{3}{2}<\nu<\mu,
\end{equation}
and the inequality is reversed for $|\nu|<\mu+3$ for $-3<\mu\leq-\frac{3}{2}$, and $\mu<\nu<\mu+3$ for $\mu>-\frac{3}{2}$;
\begin{equation}\label{monineq2}\tilde{t}_{\mu,\nu}(x)<\frac{\sqrt{\pi}\Gamma(\mu+\frac{3}{2})}{2\Gamma(\frac{\mu-\nu+3}{2})\Gamma(\frac{\mu+\nu+3}{2})}\mathbf{L}_\mu(x), \quad \mu>-\tfrac{3}{2}, \:|\mu|>|\nu|,
\end{equation}
and the inequality is reversed for $\mu>-\frac{3}{2}$, $|\mu|<|\nu|<\mu+3$;
\begin{equation}\label{monineq3}\tilde{t}_{\mu,\nu}(x)<\frac{\Gamma(\mu+2)}{\Gamma(\frac{\mu-\nu+3}{2})\Gamma(\frac{\mu+\nu+3}{2})}I_{\mu+1}(x), \quad \mu>-2,\: |\nu|<|\mu+1|,
\end{equation}
and the inequality is reversed for $\mu>-2$, $|\mu+1|<|\nu|<\mu+3$; the inequality
\begin{equation}\label{monineq3.5} 2^\mu\Gamma\big(\tfrac{\mu-\nu+3}{2}\big)\Gamma\big(\tfrac{\mu+\nu+3}{2}\big)x^{-\mu}\tilde{t}_{\mu,\nu}(x)\geq 2^{\mu_1}\Gamma\big(\tfrac{\mu_1-\nu_1+3}{2}\big)\Gamma\big(\tfrac{\mu_1+\nu_1+3}{2}\big)x^{-\mu_1}\tilde{t}_{\mu_1,\nu_1}(x), 
\end{equation}
holds for $\mu\geq\mu_1>-3$, $|\nu|<\mu+3$, $|\nu_1|<\mu_1+3$, $(\mu-\mu_1)(\mu+\mu_1+6)\geq\nu^2-\nu_1^2$, and the inequality is reversed for $\mu_1\geq\mu>-3$, $|\nu|<\mu+3$, $|\nu_1|<\mu_1+3$, $(\mu_1-\mu)(\mu_1+\mu+6)\geq\nu^2-\nu_1^2$ (inequality (\ref{monineq3.5}) is strict if any of the weaker inequalities involving $\mu$ and $\nu$ are strict);
\begin{equation}\label{monineq3.75}\frac{x\tilde{t}_{\mu,\nu}'(x)}{\tilde{t}_{\mu,\nu}(x)}>\mu+1, \quad \mu>-3, \: |\nu|<\mu+3,
\end{equation}
and
\begin{equation}\label{monineq4}\bigg|\frac{x\tilde{t}_{\mu,\nu}'(x)}{\tilde{t}_{\mu,\nu}(x)}\bigg|<\sqrt{x^2+\nu^2+2(\mu+\nu+1)b_{\mu,\nu}(x)}, \quad \mu>-3, \: |\nu|<\mu+3.
\end{equation}
All these inequalities are sharp in the limit $x\downarrow0$.

\end{itemize}

\end{theorem}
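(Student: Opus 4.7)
The proof is organized around Lemma \ref{lem2.1}, which reduces monotonicity in $x$ of a quotient of two power series to a coefficient monotonicity condition. For parts (i)--(iv), I would in each case factor out the common leading power of $x$ from numerator and denominator of the stated ratio and recognize the result as a quotient of two power series in $y=(x/2)^2$; the conclusion then follows by checking, via the elementary recurrence $\Gamma(z+1)=z\Gamma(z)$, when the ratio of successive coefficients has constant sign on $k\geq 0$. For example, in (i) the coefficient ratio is $\Gamma(k+\frac{\mu-\nu+3}{2})\Gamma(k+\frac{\mu+\nu+3}{2})/[\Gamma(k+\frac{3}{2})\Gamma(k+\nu+\frac{3}{2})]$, and its ratio of consecutive terms simplifies to $(k+\frac{\mu-\nu+3}{2})(k+\frac{\mu+\nu+3}{2})/[(k+\frac{3}{2})(k+\nu+\frac{3}{2})]$, which after expansion is $\geq 1$ iff $(\mu-\nu)[k+\frac{\mu+\nu+6}{4}]\geq 0$. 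The parameter restrictions in (i) are precisely those making this sign condition uniform in $k\geq 0$; parts (ii), (iii), (iv) are handled by analogous bookkeeping on their respective coefficient ratios.

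Part (v) is the most transparent application: writing $x\tilde{t}_{\mu,\nu}'(x) = \sum_{k\geq 0}c_k(\mu+2k+1)(x/2)^{\mu+2k+1}$ and $\tilde{t}_{\mu,\nu}(x) = \sum_{k\geq 0}c_k (x/2)^{\mu+2k+1}$ with $c_k = 1/[\Gamma(k+\frac{\mu-\nu+3}{2})\Gamma(k+\frac{\mu+\nu+3}{2})]$, the coefficient ratio $\mu+2k+1$ is strictly increasing in $k$. Part (vi) is the step I anticipate being the main obstacle, because the monotonicity is in the parameter $\delta$ rather than in $x$, so Lemma \ref{lem2.1} does not apply directly. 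Setting $R(\delta,x)=(x/2)A(\delta,x)/B(\delta,x)$ with $A, B$ the obvious power series in $y=(x/2)^2$ whose coefficients are related by $a_k = b_k/(k+\frac{\mu+\nu+3}{2}+\delta)$, I would differentiate $A/B$ in $\delta$ using $(\log\Gamma)' = \psi$ and symmetrize the resulting double sum under $k\leftrightarrow j$, expressing $\partial_\delta(A/B)$ as the difference of two quantities: one manifestly nonnegative (involving the antisymmetric factor $(k-j)[\psi(k+\cdot)-\psi(j+\cdot)]$) and one strictly positive. Showing that the second dominates is where the real work lies; the plan is to use a correlation inequality of Chebyshev/FKG type together with the explicit relation between $a_k$ and $b_k$, or, failing that, to invoke an auxiliary Tur\'{a}n-type argument.

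For (vii), each of (\ref{monineq1})--(\ref{monineq3.75}) is an immediate consequence of the corresponding monotonicity in (i)--(v), obtained by passing $x\downarrow 0$ in the monotone ratio and computing the limit using the asymptotics (\ref{ttend0}), (\ref{Ltend0}), (\ref{itend0}); the claim of sharpness at $x\downarrow 0$ is then automatic since the constants appearing in the bounds are exactly these limiting values. For the remaining bound (\ref{monineq4}), I would exploit the ODE (\ref{lommelode}): setting $q(x) = x\tilde{t}_{\mu,\nu}'(x)/\tilde{t}_{\mu,\nu}(x)$, dividing the ODE through by $\tilde{t}_{\mu,\nu}(x)$, and using the definition of $b_{\mu,\nu}$ to rewrite the inhomogeneous term, the ODE collapses into the Riccati-type identity $xq'(x)+q(x)^2 = x^2+\nu^2+2(\mu+\nu+1)b_{\mu,\nu}(x)$; since $q$ is strictly increasing by (v), $xq'(x)>0$, and (\ref{monineq4}) follows at once.
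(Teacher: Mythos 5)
Your plan for part (vi) has a genuine gap, and it is the one step where you depart from the paper. Writing $\tilde{t}_{\mu+\delta+1,\nu+\delta+1}(x)/\tilde{t}_{\mu+\delta,\nu+\delta}(x)=\tfrac{x}{2}\sum_k a_k y^k/\sum_k b_k y^k$ with $y=(x/2)^2$, $b_k=1/[\Gamma(k+\tfrac{\mu-\nu+3}{2})\Gamma(k+\tfrac{\mu+\nu+3}{2}+\delta)]$ and $a_k=b_k/(k+\tfrac{\mu+\nu+3}{2}+\delta)$, increasing $\delta$ has two competing effects: the factor $1/(k+\tfrac{\mu+\nu+3}{2}+\delta)$ decreases pointwise (pushing the ratio down), while the weights $b_k(\delta)$ tilt toward small $k$ in the likelihood-ratio order (pushing the expectation of a decreasing function of $k$ up). A Chebyshev/FKG correlation inequality fixes the sign of a single covariance; it does not decide which of these opposing effects dominates, and you explicitly leave that comparison open (``where the real work lies''). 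As written, part (vi) is therefore not proved. The paper's route is entirely different and sidesteps the issue: it starts from the positivity of $\bigl[x^{-\delta}\tilde{t}_{\mu,\nu}(x)/\tilde{t}_{\mu+\delta,\nu+\delta}(x)\bigr]'$ supplied by part (iv), expands this by the quotient rule, rewrites $[x^{-\nu}\tilde{t}_{\mu,\nu}(x)]'$ via the differentiation formula (\ref{diffone1}), and then disposes of the leftover inhomogeneous terms $a_{\mu,\nu}$ by inequality (\ref{monineq3.5}); what survives is exactly $\tilde{t}_{\mu+1,\nu+1}(x)\tilde{t}_{\mu+\delta,\nu+\delta}(x)>\tilde{t}_{\mu,\nu}(x)\tilde{t}_{\mu+\delta+1,\nu+\delta+1}(x)$, which is the claimed monotonicity in $\delta$. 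If you prefer to stay on your own route, note that your relation $a_k=b_k/(k+\tfrac{\mu+\nu+3}{2}+\delta)$ means the ratio equals $\tfrac{x}{2}B(\delta+1)/B(\delta)$ with $B(\delta)=\sum_k y^k/[\Gamma(k+\tfrac{\mu-\nu+3}{2})\Gamma(k+\tfrac{\mu+\nu+3}{2}+\delta)]$, so what you actually need is log-concavity of $B$ in the parameter $\delta$ --- the Kalmykov--Karp territory of Lemma \ref{loglem}, used by the paper in Section \ref{sec3.2} --- rather than an FKG argument.

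The rest of your proposal matches the paper. Parts (i)--(iii) and (v) are handled exactly as you describe via Lemma \ref{lem2.1}; your sign condition in (i) is the paper's $(\mu-\nu)(4k+\mu+\nu+6)$ up to the positive factor $(2k+3)(2k+2\nu+3)$, whose positivity is what the side conditions on $\nu$ guarantee. Your dismissal of (iv) as ``analogous bookkeeping'' is no less detailed than the paper, which defers it to Mondal's Theorem 2.1(i). For part (vii), the passage to the limit $x\downarrow0$ using (\ref{ttend0}), (\ref{itend0}), (\ref{Ltend0}) is the paper's argument, and your Riccati identity $xq'(x)+q(x)^2=x^2+\nu^2+2(\mu+\nu+1)b_{\mu,\nu}(x)$ for $q(x)=x\tilde{t}_{\mu,\nu}'(x)/\tilde{t}_{\mu,\nu}(x)$ is correct and is an exact repackaging of the paper's derivation of (\ref{monineq4}), with $xq'(x)>0$ supplied by part (v) in both versions.
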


\begin{proof} (i) From the power series representations (\ref{Ldef}) and (\ref{tseries}) of $\mathbf{L}_\nu(x)$ and $\tilde{t}_{\mu,\nu}(x)$, respectively, we can write
\begin{equation*}\frac{x^{\mu-\nu}\mathbf{L}_\nu(x)}{\tilde{t}_{\mu,\nu}(x)}=2^{\mu-\nu}\frac{\sum_{k=0}^\infty\alpha_{\nu,k}x^{2k}}{\sum_{k=0}^\infty\beta_{\mu,\nu,k} x^{2k}},
\end{equation*}
where
\begin{equation}\label{betaterm} \alpha_k=\frac{1}{\Gamma(k+\frac{3}{2})\Gamma(k+\nu+\frac{3}{2})}\quad \text{and} \quad \beta_{\mu,\nu,k}=\frac{1}{\Gamma\big(k+\frac{\mu-\nu+3}{2}\big)\Gamma\big(k+\frac{\mu+\nu+3}{2}\big)}.
\end{equation}
We now let $q_k=\alpha_{\nu,k}/\beta_{\mu,\nu,k}$ and use the standard formula $\Gamma(x+1)=x\Gamma(x)$ to calculate 
\begin{equation*}\frac{q_{k+1}}{q_k}-1=\frac{\big(k+\frac{\mu-\nu+3}{2}\big)\big(k+\frac{\mu+\nu+3}{2}\big)}{(k+\frac{3}{2})(k+\nu+\frac{3}{2})}-1=\frac{(\mu-\nu)(4k+\mu+\nu+6)}{(2k+3)(2k+2\nu+3)}.
\end{equation*}
Thus, $q_{k+1}/q_k>1$ for all $k=0,1,2,\ldots$ if $-\frac{3}{2}<\nu<\mu$.  Similarly, $q_{k+1}/q_k<1$ for all $k=0,1,2,\ldots$ if $\nu>-\frac{3}{2}$, $\mu<\nu$ and $\mu+\nu+6>0$.
%For a given $k=0,1,2,\ldots$, this quotient is $>1$ if
%\[4(\mu-\nu)k+\mu^2+6\mu-\nu^2-6\nu>0,\]
%and thus $q_{k+1}/q_k>1$ for all $k=0,1,2,\ldots$ if $\mu>\nu$.  Similarly, $q_{k+1}/q_k<1$ for all $k=0,1,2,\ldots$ if $\mu<\nu$. 
 The assumptions $-\frac{3}{2}<\nu<\mu$ ($|\nu|<\mu+3$ for $-3<\mu\leq-\frac{3}{2}$, and $\mu<\nu<\mu+3$ for $\mu>-\frac{3}{2}$) ensure that $\nu>-\frac{3}{2}$, $\mu-\nu+3>0$ and $\mu+\nu+3>0$, and so all coefficients in the power series of $\mathbf{L}_\nu(x)$ and $\tilde{t}_{\mu,\nu}(x)$ are positive.  (The conditions on $\mu$ and $\nu$ in the other monotonicity results of the theorem will also always guarantee that all coefficients in the power series are positive, and we will not comment on this further in their proof.)  Finally, as the radius of convergence of the power series of $\mathbf{L}_\nu(x)$ and $\tilde{t}_{\mu,\nu}(x)$ is infinity, the assertion in part (i) follows from Lemma \ref{lem2.1}.

\vspace{2mm}

\noindent{(ii)} We proceed similarly to part (i) through an application of Lemma \ref{lem2.1}.  Consider
\begin{equation*}\frac{\mathbf{L}_\mu(x)}{\tilde{t}_{\mu,\nu}(x)}=\frac{\sum_{k=0}^\infty\alpha_{\mu,k}x^{2k}}{\sum_{k=0}^\infty\beta_{\mu,\nu,k} x^{2k}}.
\end{equation*}
Let  $q_k=\alpha_{\mu,k}/\beta_{\mu,\nu,k}$.  Then
\begin{equation*}\frac{q_{k+1}}{q_k}-1=\frac{\big(k+\frac{\mu-\nu+3}{2}\big)\big(k+\frac{\mu+\nu+3}{2}\big)}{(k+\frac{3}{2})(k+\mu+\frac{3}{2})}-1=\frac{\mu^2-\nu^2}{(2k+3)(2k+2\mu+3)}.
\end{equation*}
Thus, $q_{k+1}/q_k>1$ for all $k=0,1,2,\ldots$ if $|\mu|>|\nu|$ and $\mu>-\frac{3}{2}$, and $q_{k+1}/q_k<1$ for all $k=0,1,2,\ldots$ if $|\mu|<|\nu|$ and $\mu>-\frac{3}{2}$.  The additional conditions on $\mu$ and $\nu$ in the statement of the monotonicity result ensure that the coefficients of the powers series are positive. 

\vspace{2mm}

\noindent{(iii)} From the power series representation (\ref{Idef}) of $I_{\mu+1}(x)$ we have that
\begin{equation*}\frac{I_{\mu+1}(x)}{\tilde{t}_{\mu,\nu}(x)}=\frac{\sum_{k=0}^\infty\gamma_{\mu,k}x^{2k}}{\sum_{k=0}^\infty\beta_{\mu,\nu,k} x^{2k}},
\end{equation*}
where $\gamma_{\mu,k}=1/(k!\Gamma(k+\mu+2))$.  With $q_k=\gamma_{\mu,k}/\beta_{\mu,\nu,k}$ we have
\begin{equation*}\frac{q_{k+1}}{q_k}-1=\frac{\big(k+\frac{\mu-\nu+3}{2}\big)\big(k+\frac{\mu+\nu+3}{2}\big)}{(k+1)(k+\mu+2)}-1=\frac{(\mu+1)^2-\nu^2}{4(k+1)(k+\mu+2)},
\end{equation*}
Therefore the sequence $\{q_k\}_{k\geq0}$ is strictly increasing (decreasing) if $|\mu+1|>|\nu|$ and $\mu>-2$ ($|\mu+1|<|\nu|$ and $\mu>-2$).

\vspace{2mm}

\noindent{(iv)} This is very similar to part (i) of Theorem 2.1 of \cite{m17}, which is given for a different normalization of the modified Lommel function $t_{\mu,\nu}(x)$.  Due to the different normalization used, our result has a different range of validity.  We omit the details.

\vspace{2mm}

\noindent{(v)} We can write
\begin{equation*}\frac{x\tilde{t}_{\mu,\nu}'(x)}{\tilde{t}_{\mu,\nu}(x)}=\frac{\sum_{k=0}^\infty\delta_{\mu,\nu,k}x^{2k}}{\sum_{k=0}^\infty\beta_{\mu,\nu,k} x^{2k}},
\end{equation*}
where $\delta_{\mu,\nu,k}=(2k+\mu+1)\beta_{\mu,\nu,k}$.  As the sequence $\{\delta_{\mu,\nu,k}/\beta_{\mu,\nu,k}\}_{k\geq0}$ is strictly increasing it follows by Lemma \ref{lem2.1} that $x\mapsto x\tilde{t}_{\mu,\nu}'(x)/\tilde{t}_{\mu,\nu}(x)$ is strictly increasing on $(0,\infty)$ for  $\mu>-3$, $|\nu|<\mu+3$.

\vspace{2mm}

\noindent{(vi)} By part (iv), we have
\begin{equation*}\bigg[\frac{x^{-\nu}\tilde{t}_{\mu,\nu}(x)}{x^{-\nu-\delta}\tilde{t}_{\mu+\delta,\nu+\delta}(x)}\bigg]'=\bigg[\frac{x^{-\mu}\tilde{t}_{\mu,\nu}(x)}{x^{-\mu-\delta}\tilde{t}_{\mu+\delta,\nu+\delta}(x)}\bigg]'>0,
\end{equation*}
which by the quotient rule can be seen to be equivalent to
\[[x^{-\nu}\tilde{t}_{\mu,\nu}(x)]'[x^{-\nu-\delta}\tilde{t}_{\mu+\delta,\nu+\delta}(x)]-[x^{-\nu}\tilde{t}_{\mu,\nu}(x)][x^{-\nu-\delta}\tilde{t}_{\mu+\delta,\nu+\delta}(x)]'>0.\]
Using the differentiation formula (\ref{diffone1}) we can express this inequality in the form
\begin{align*}x^{-2\nu-\delta}[&\tilde{t}_{\mu+1,\nu+1}(x)\tilde{t}_{\mu+\delta,\nu+\delta}(x)-\tilde{t}_{\mu,\nu}(x)\tilde{t}_{\mu+\delta+1,\nu+\delta+1}(x)]\\
&>x^{\mu-\nu}\bigg[\frac{2^{-\mu-\delta}x^{-\mu}\tilde{t}_{\mu,\nu}(x)}{\Gamma(\frac{\mu-\nu+3}{2})\Gamma(\frac{(\mu+\delta)+(\nu+\delta)+3}{2})}-\frac{2^{-\mu}x^{-\mu-\delta}\tilde{t}_{\mu+\delta,\nu+\delta}(x)}{\Gamma(\frac{\mu-\nu+3}{2})\Gamma(\frac{\mu+\nu+3}{2})}\bigg].
\end{align*}
By inequality (\ref{monineq3.5}) we have that the right-hand side is positive, proving the assertion.

\vspace{2mm}

\noindent{(vii)} Inequalities (\ref{monineq1})--(\ref{monineq3.75}), and their reverses, follows from parts (i)--(v), respectively, together with a calculation of the ratios $x^{\mu-\nu}\mathbf{L}_\nu(x)/\tilde{t}_{\mu,\nu}(x)$, $\mathbf{L}_\mu(x)/\tilde{t}_{\mu,\nu}(x)$, $I_{\mu+1}(x)/\tilde{t}_{\mu,\nu}(x)$, $x^{\mu_1-\mu}\tilde{t}_{\mu,\nu}(x)/\tilde{t}_{\mu_1,\nu_1}(x)$ and $x\tilde{t}_{\mu,\nu}'(x)/\tilde{t}_{\mu,\nu}(x)$ in the limit $x\downarrow0$ using the limiting forms (\ref{ttend0}), (\ref{itend0}) and (\ref{Ltend0}).  Finally, we prove inequality (\ref{monineq4}).  Recall that $t_{\mu,\nu}(x)$ is a particular solution to (\ref{lommelode}).  On using the normalization (\ref{tseries}) that expresses $\tilde{t}_{\mu,\nu}(x)$ in terms of $t_{\mu,\nu}(x)$ and recalling the notation (\ref{anot}) for $a_{\mu,\nu}(x)$ we see that $\tilde{t}_{\mu,\nu}(x)$ satisfies
\begin{equation*}\tilde{t}_{\mu,\nu}''(x)=\bigg(1+\frac{\nu^2}{x^2}\bigg)\tilde{t}_{\mu,\nu}(x)-\frac{1}{x}\tilde{t}_{\mu,\nu}'(x)+\frac{\mu+\nu+1}{x}a_{\mu,\nu}(x),
\end{equation*}
and therefore
\begin{align*}\frac{1}{x}[\tilde{t}_{\mu,\nu}(x)]^2\bigg[\frac{x\tilde{t}_{\mu,\nu}'(x)}{\tilde{t}_{\mu,\nu}(x)}\bigg]'=\bigg(1+\frac{\nu^2}{x^2}\bigg)[\tilde{t}_{\mu,\nu}(x)]^2-[\tilde{t}_{\mu,\nu}'(x)]^2+\frac{\mu+\nu+1}{x}a_{\mu,\nu}(x)\tilde{t}_{\mu,\nu}(x).
\end{align*}
By part (v), the right-hand side of the above equation is positive, and on rearranging and recalling that $b_{\mu,\nu}(x)=\frac{xa_{\mu,\nu}(x)}{2\tilde{t}_{\mu,\nu}(x)}$ we obtain inequality (\ref{monineq4}).
\end{proof}

\begin{remark}
\begin{itemize}

\item [(i)] Parts (iii)--(vi) generalise monotonicity results for the modified Struve function $\mathbf{L}_\nu(x)$ given in Theorem 2.2 of \cite{bp14}.  Indeed, the results exactly reduce to those of \cite{bp14} in the case $\mu=\nu$. 
%Part (iii) is very similar to part (i) of Theorem 2.1 of \cite{m17}, which is given for a different normalization of the modified Lommel function $t_{\mu,\nu}(x)$.  Due to the different normalization used, our result holds for a larger parameter range.
 %Inequalities (\ref{monineq3}) and (\ref{monineq4}) also generalise bounds given in Theorem 2.2 of \cite{bp14}.
 
\item[(ii)] Inequality (\ref{monineq3}) (and its reverse) complement the following two-sided inequality of \cite{gaunt ineq7},
\begin{align}\label{star271}I_\nu(x)<\bigg(\frac{x^2}{(\mu+3)^2-\nu^2+x^2}\bigg)^{-\frac{\mu-\nu+1}{2}}\tilde{t}_{\mu,\nu}(x)
<C_{\mu,\nu}I_\nu(x),
\end{align}
where
\[C_{\mu,\nu}=\frac{((\mu+3)^2-\nu^2)^{\frac{\mu-\nu+1}{2}}\Gamma(\nu+1)}{2^{\mu-\nu+1}\Gamma\big(\frac{\mu-\nu+3}{2}\big)\Gamma(\frac{\mu+\nu+3}{2}\big)},\]
and both the lower and upper bounds are valid for $\mu>-2$, $-1<\nu<\mu+1$.  Using the limiting forms (\ref{ttend0}) and (\ref{itend0}) we see that inequality (\ref{monineq3}) (and its reverse) and the upper bound of (\ref{star271}) are tight in the limit $x\downarrow0$, but that the lower bound of (\ref{star271}) is not.  All bounds are of the correct asymptotic order $O(x^{-1/2}\mathrm{e}^x)$, as $x\rightarrow\infty$, but only the lower bound of (\ref{star271}) is tight in this limit (see (\ref{Itendinfinity}) and (\ref{itendinf})).  It is interesting to note that inequality (\ref{monineq3}) is expressed in terms of $I_{\mu+1}(x)$, whereas (\ref{star271}) is expressed in terms of $I_\nu(x)$.  The modified Bessel function $I_{\mu+1}(x)$ has the same asymptotic order as $\tilde{t}_{\mu,\nu}(x)$ in both the limits $x\downarrow0$ and $x\rightarrow\infty$, which helps to explain how the bound (\ref{monineq3}) has a similar performance to (\ref{star271}) despite taking a simpler form.

We also note that inequalities (\ref{monineq2}) and (\ref{monineq3}) are very useful in that they allow one to obtain a number of different bounds for $\tilde{t}_{\mu,\nu}(x)$ as a consequence of bounds in the existing literature for $\mathbf{L}_\mu(x)$ and $I_{\mu+1}(x)$.  For reasons of brevity, we only note one such example.  Applying inequality (4.59) of \cite{gaunt ineq7} to inequality (\ref{monineq2}) yields the following neat bound which further complements inequalities (\ref{monineq3}) and (\ref{star271}): for $x>0$,
\begin{equation*}\tilde{t}_{\mu,\nu}(x)<\frac{\Gamma(\mu+1)\sqrt{3(2\mu+3)}}{2\Gamma(\frac{\mu-\nu+3}{2})\Gamma(\frac{\mu+\nu+3}{2})}\frac{xI_\mu(x)}{\sqrt{x^2+3(2\mu+3)}}, \quad \mu>-1, \:|\mu|>|\nu|.
\end{equation*}
This inequality is also tight as $x\downarrow0$ and has the correct asymptotic order as $x\rightarrow\infty$.

%The upper bound of (\ref{star271}) takes a more complicated form than inequality (\ref{monineq3}) 

\item [(iii)] A similar upper bound for the condition number $|x\tilde{t}_{\mu,\nu}'(x)/\tilde{t}_{\mu,\nu}(x)|$ was obtained in inequality (4.46) of \cite{gaunt ineq7}: for $x>0$,
\begin{equation*}\bigg|\frac{x\tilde{t}_{\mu,\nu}'(x)}{\tilde{t}_{\mu,\nu}(x)}\bigg|<\sqrt{\big(\nu+\tfrac{1}{2}\big)^2+x^2}+2b_{\mu,\nu}(x)-\tfrac{1}{2}, \quad \mu>-\tfrac{3}{2},\: -\tfrac{1}{2}\leq\nu<\mu+1.
\end{equation*}
An asymptotic analysis of the bounds using the limiting forms (\ref{ttend0}) and (\ref{Itendinfinity}) shows that our bound (\ref{monineq4}) outperforms the bound of \cite{gaunt ineq7} in the limit $x\downarrow0$, whilst the reverse is true as $x\rightarrow\infty$.

\end{itemize}
\end{remark}

\begin{remark}We note some consequences of inequality (\ref{monineq3.75}), which we now recall:
\begin{equation}\label{monineq3.8}\frac{u\tilde{t}_{\mu,\nu}'(u)}{\tilde{t}_{\mu,\nu}(u)}>\mu+1, \quad u>0,\: \mu>-3, \: |\nu|<\mu+3.
\end{equation}
On integrating both sides of (\ref{monineq3.8}) between $x$ and $y$ we obtain, 
\begin{equation}\label{lomineq90}\frac{\tilde{t}_{\mu,\nu}(x)}{\tilde{t}_{\mu,\nu}(y)}<\bigg(\frac{x}{y}\bigg)^{\mu+1}, \quad 0<x<y,\: \mu>-3, \: |\nu|<\mu+3.
\end{equation}
%Taking the limit $x\downarrow0$ in (\ref{lomineq90}) and computing $\lim_{x\downarrow0}x^{-\mu+1}\tilde{t}_{\mu,\nu}(x)$ using (\ref{ttend0}) and then replacing $y$ by $x$ gives the inequality
%\begin{equation}\tilde{t}_{\mu,\nu}(x)>\frac{(\frac{1}{2}x)^{\mu+1}}{\Gamma(\frac{\mu-\nu+1}{2})\Gamma(\frac{\mu+\nu+3}{2})}, \quad x>0,\: \mu>-3, \: |\nu|<|\mu+3|,
%\end{equation}
%which is sharp in the limit $x\downarrow0$. 
Also, from (\ref{Iidentity}) and (\ref{lomrel2}) we obtain the identity
\begin{equation*}x\tilde{t}_{\mu,\nu}'(x)+\nu\tilde{t}_{\mu,\nu}(x)=x\tilde{t}_{\mu-1,\nu-1}(x),
\end{equation*}
and combining with inequality (\ref{monineq3.8}) yields the inequality
\begin{equation}\label{lomineq91}\frac{\tilde{t}_{\mu,\nu}(x)}{\tilde{t}_{\mu-1,\nu-1}(x)}<\frac{x}{\mu+\nu+1}, \quad x>0,\: \mu>-3, \: |\nu|<\mu+3.
\end{equation}
This inequality is sharp in the limit $x\downarrow0$.  Note that inequality (\ref{lomineq91}) also follows as a special case of inequality (\ref{monineq3.5}).  Inequalities (\ref{lomineq90}) and (\ref{lomineq91}) generalise bounds of \cite{bp14} involving the modified Struve function $\mathbf{L}_\nu(x)$.  The bounds of \cite{bp14} improved the range of validity of earlier results of \cite{jn98}.

A number of inequalities for the quantities $\tilde{t}_{\mu,\nu}(x)/\tilde{t}_{\mu,\nu}(y)$ and $\tilde{t}_{\mu,\nu}(x)/\tilde{t}_{\mu-1,\nu-1}(x)$ were obtained by \cite{gaunt ineq7}.  The simple bounds obtained in this remark have the advantage over those of \cite{gaunt ineq7} by having a larger range of validity.  Also, unlike inequality (\ref{lomineq91}), none of the upper bounds of \cite{gaunt ineq7} for $\tilde{t}_{\mu,\nu}(x)/\tilde{t}_{\mu-1,\nu-1}(x)$ are sharp in the limit $x\downarrow0$.  However, the bounds of \cite{gaunt ineq7} perform much better for `large' $x$ than the bounds given in this remark. 
 
\end{remark}

We now obtain a further monotonicity result and associated  inequality that complements an inequality of \cite{gaunt ineq7}.

\begin{proposition}Suppose that 
\begin{equation}\label{cond1}\nu>\max\bigg\{-\mu-2,\;\sqrt{\frac{2(\mu+3)}{5}}-\mu-3,\;-\frac{5(\mu+3)}{7}\bigg\},
\end{equation}
or
\begin{equation}\label{cond2}\nu>\max\bigg\{-\mu-2,\;\sqrt{\frac{2(\mu+4)}{7}}-\mu-3,\;-\frac{5(\mu+3)}{7}\bigg\},
\end{equation}
%\begin{equation*}\nu\geq\max\bigg\{-\mu-2,\;g(\mu),\;-\frac{5(\mu+3)}{7}\bigg\},
%\end{equation*}
%where
%\begin{equation*}g(\mu)=\min\bigg\{\sqrt{\frac{2(\mu+3)}{5}}-\mu-3,\;\sqrt{\frac{2(\mu+4)}{7}}-\mu-3\bigg\}.
%\end{equation*}
with the range of validity in (\ref{cond2}) being better than in (\ref{cond1}) for $\mu>-\frac{1}{2}$. Then the function $x\mapsto x^{-\mu}\tilde{t}_{\mu,\nu}(x)/\sinh(x/(\mu+\nu+3))$ is strictly increasing  on $(0,\infty)$.  Consequently, for either of these ranges of validity we have, for $x>0$,
\begin{equation}\label{sinhineq2}\tilde{t}_{\mu,\nu}(x)> \frac{(\mu+\nu+3)x^\mu\sinh\big(\frac{x}{\mu+\nu+3}\big)}{2^{\mu+1}\Gamma\big(\frac{\mu-\nu+3}{2}\big)\Gamma\big(\frac{\mu+\nu+3}{2}\big)},
\end{equation}
\end{proposition}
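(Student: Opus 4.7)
The plan is to apply Lemma \ref{lem2.1}, in its odd-series form noted after that lemma, to the quotient
\[g(x):=\frac{x^{-\mu}\tilde{t}_{\mu,\nu}(x)}{\sinh(x/c)},\qquad c:=\mu+\nu+3.\]
Using (\ref{tseries}) and the Taylor expansion of $\sinh$, both numerator and denominator are odd power series $\sum_{k\geq 0}a_kx^{2k+1}$ and $\sum_{k\geq 0}b_kx^{2k+1}$ with
\[a_k=\frac{1}{2^{\mu+2k+1}\Gamma(k+\tfrac{\mu-\nu+3}{2})\Gamma(k+\tfrac{\mu+\nu+3}{2})},\qquad b_k=\frac{1}{c^{2k+1}(2k+1)!}.\]
Both hypotheses force $\nu>-\mu-2$, hence $c>1$, and the implicit natural domain $|\nu|<\mu+3$ makes all $a_k,b_k$ positive. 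Setting $q_k=a_k/b_k$, the identity $\Gamma(x+1)=x\Gamma(x)$ gives
\[\frac{q_{k+1}}{q_k}=\frac{c^2(2k+2)(2k+3)}{(2k+\mu-\nu+3)(2k+\mu+\nu+3)},\]
and expanding the numerator and denominator with $c=\mu+\nu+3$ shows that $q_{k+1}>q_k$ is equivalent to $f(k)>0$, where
\[f(k)=4(c^2-1)k^2+2[5c^2-2(\mu+3)]k+c[7c-2(\mu+3)].\]

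The main obstacle is verifying $f(k)>0$ for every integer $k\geq 0$ under each hypothesis. The leading coefficient is positive since $c>1$, and the constant term $c[7c-2(\mu+3)]$ is positive precisely when $\nu>-5(\mu+3)/7$, which is common to (\ref{cond1}) and (\ref{cond2}). Under (\ref{cond1}) the extra bound $c^2>2(\mu+3)/5$ makes the linear coefficient positive as well, so all three coefficients of $f$ are positive and $f(k)>0$ for every real $k\geq 0$. Under (\ref{cond2}), I verify $f(1)>0$ via the splitting
\[f(1)=21c^2-2c(\mu+3)-4(\mu+4)=\bigl(7c^2-2c(\mu+3)\bigr)+\bigl(14c^2-4(\mu+4)\bigr),\]
the first bracket being positive by $\nu>-5(\mu+3)/7$ and the second by $c^2>2(\mu+4)/7$. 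A short calculation shows the vertex $k^{*}$ of $f$ satisfies $k^{*}<1$ iff $c^2>2(\mu+5)/9$: for $\mu>-1/2$ this is implied by (\ref{cond2}) since $2(\mu+4)/7>2(\mu+5)/9$ precisely in this range; for $\mu\leq -1/2$ the inequality $2(\mu+4)/7\geq 2(\mu+3)/5$ shows (\ref{cond2}) already entails the linear-coefficient condition of (\ref{cond1}), reducing to the previous argument. In either subcase, $f$ is strictly increasing on $[1,\infty)$, so $f(k)\geq f(1)>0$ for all $k\geq 1$ while $f(0)>0$ from the constant-term check.

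Once $\{q_k\}$ is strictly increasing, Lemma \ref{lem2.1} yields that $g$ is strictly increasing on $(0,\infty)$. The asymptotic (\ref{ttend0}) together with $\sinh(x/c)\sim x/c$ as $x\downarrow 0$ gives
\[\lim_{x\downarrow 0}g(x)=\frac{c}{2^{\mu+1}\Gamma(\tfrac{\mu-\nu+3}{2})\Gamma(\tfrac{\mu+\nu+3}{2})},\]
and rearranging the strict inequality $g(x)>\lim_{x\downarrow 0}g(x)$ delivers (\ref{sinhineq2}).
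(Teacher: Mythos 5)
Your proposal is correct and follows essentially the same route as the paper: both apply Lemma \ref{lem2.1} to the odd power series of $x^{-\mu}\tilde{t}_{\mu,\nu}(x)$ and $\sinh(x/(\mu+\nu+3))$, reduce $q_{k+1}>q_k$ to the positivity of the same quadratic in $k$ (your $f(k)$ is exactly $4P_{\mu,\nu}(k)$), and handle (\ref{cond1}) by checking that all three coefficients are positive. The only divergence is in the treatment of (\ref{cond2}), where the paper uses a successive-differences argument ($P_{\mu,\nu}(0)>0$ and $\Delta P_{\mu,\nu}(k)\geq0$) while you check $f(1)>0$ together with the location of the parabola's vertex; both hinge on the same threshold $c^2>2(\mu+4)/7$ and are equivalent in substance, and your computations check out.
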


\begin{proof}Consider the quotient
\begin{equation*}Q_{\mu,\nu}(x)=\frac{x^{-\mu}\tilde{t}_{\mu,\nu}(x)}{\sinh(\frac{x}{\mu+\nu+3})}=\frac{\sum_{k=0}^\infty \beta_{\mu,\nu,k}(\frac{1}{2}x)^{2k+1}}{\sum_{k=0}^\infty \epsilon_{\mu,\nu,k}(\frac{1}{2}x)^{2k+1}},
\end{equation*}
where $\beta_{\mu,\nu,k}$ is defined as in (\ref{betaterm}) and 
\[\epsilon_{\mu,\nu,k}=\frac{1}{(2k+1)!(\frac{\mu+\nu+3}{2})^{2k+1}}.\] 
Letting $q_{k}=\beta_{\mu,\nu,k}/\epsilon_{\mu,\nu,k}$ we have
\begin{equation*}\frac{q_{k+1}}{q_k}=\frac{(2k+2)(2k+3)(\frac{\mu+\nu+3}{2})^2}{(k+\frac{\mu-\nu+3}{2})(k+\frac{\mu+\nu+3}{2})},
\end{equation*}
and a short calculation shows that $q_{k+1}/q_k>1$ is equivalent to
\begin{align}\label{quadratic}P_{\mu,\nu}(k):=(\mu+\nu+2)(\mu+\nu+4)k^2+\frac{5}{2}\bigg[(\mu+\nu+3)^2-\frac{2}{5}(\mu+3)\bigg]k&\nonumber\\
+\frac{1}{4}(\mu+\nu+3)(5\mu+7\nu+15)>0&
\end{align}
for all $k=0,1,2,\ldots$.  It can be checked that the conditions on $\mu$ and $\nu$ in (\ref{cond1}) ensure that all coefficients of this quadratic are non-negative and that at least one of them is strictly positive (and that the coefficients of the power series are positive), meaning that $q_{k+1}/q_k>1$ for all $k=0,1,2,\ldots$.  Appealing to Lemma \ref{lem2.1} thus proves the monotonicity result (for the range of validity (\ref{cond1})), and inequality (\ref{sinhineq2}) is then deduced from this and a computation of the limit $\lim_{x\downarrow0}Q_{\mu,\nu}(x)$ using the limiting form (\ref{ttend0}).

We can verify that the monotonicity result (and thus inequality (\ref{sinhineq2})) is also valid in the alternative range of validity (\ref{cond2}) using a successive differences method. To ensure that $P_{\mu,\nu}(k)>0$ for all $k=0,1,2,\ldots$ it suffices to show that $P_{\mu,\nu}(0)>0$ and $\Delta P_{\mu,\nu}(k):=P_{\mu,\nu}(k+1)-P_{\mu,\nu}(k)\geq0$ for all $k=0,1,2,\ldots$. A short calculation gives that
\begin{align*}\Delta P_{\mu,\nu}(k)&=P_{\mu,\nu}(k+1)-P_{\mu,\nu}(k)\\
&=2(\mu+\nu+2)(\mu+\nu+4)k+\frac{7}{2}(\mu+\nu+3)^2-(\mu+4).
\end{align*}
We are thus guaranteed that $P_{\mu,\nu}(k)>0$ for all $k=0,1,2,\ldots$ if
\begin{align}\label{cond3}\mu+\nu+2\geq0, \quad \frac{7}{2}(\mu+\nu+3)^2-(\mu+4)\geq0, \quad 5\mu+7\nu+15>0.
\end{align}
Note that these conditions also ensure that the coefficients of the power series are positive. From condition (\ref{cond3}) we deduce condition (\ref{cond2}). Finally, the assertion that the range of validity (\ref{cond2}) is better than (\ref{cond1}) for $\mu>-\frac{1}{2}$ follows because in this case $2(\mu+3)/5>2(\mu+4)/7$.
\end{proof}

%It can be checked that the conditions on $\mu$ and $\nu$ ensure that all coefficients of this quadratic are positive (and that the coefficients of the power series are positive), meaning that $q_{k+1}/q_k>1$.  Appealing to Lemma \ref{lem2.1} thus proves the monotonicity result, and inequality (\ref{sinhineq2}) is then deduced from this and a computation of the limit $\lim_{x\downarrow0}Q_{\mu,\nu}(x)$ using the limiting form (\ref{ttend0}).

\begin{remark}

\begin{itemize}

\item [(i)] Inequality (\ref{sinhineq2}) generalises a lower bound of \cite[p$.$ 902]{bp14} for the modified Struve function $\mathbf{L}_\nu(x)$ (and in fact in this $\mu=\nu$ case the inequality improves on the bound of \cite{bp14} by having an additional factor of $(2\nu+3)$ in the numerator of the multiplicative constant). Inequality (\ref{sinhineq2}) also complements the following inequality given in Proposition 3.2 of \cite{gaunt ineq7}, which states that, for $x>0$,
\begin{equation*}\tilde{t}_{\mu,\nu}(x)\leq \frac{x^\mu\sinh(x)}{2^{\mu+1}\Gamma\big(\frac{\mu-\nu+3}{2}\big)\Gamma\big(\frac{\mu+\nu+3}{2}\big)},
\end{equation*}
which holds for $\mu\geq-\frac{1}{2}$, $(\mu+3)^2-\nu^2\geq6$, and the inequality is reversed if $-3<\mu\leq-\frac{1}{2}$, $(\mu+3)^2-\nu^2\leq6$, $|\nu|<\mu+3$, with equality if and only if $\mu=\nu=-\frac{1}{2}$.

\item[(ii)] As $P_{\mu,\nu}(k)$ is a quadratic polynomial when $(\mu+\nu+2)(\mu+\nu+3)\not=0$, if $\mu+\nu+2>0$ and $P_{\mu,\nu}(0)<0$, that is $-\mu-2<\nu<-5(\mu+3)/7$, then there exists $m\geq1$ such that $P_{\mu,\nu}(k)<0$ for $0\leq k< m$ and $P_{\mu,\nu}(k)\geq0$ for $k\geq m$. Therefore the sequence $\{q_k\}_{k\geq0}=\{\beta_{\mu,\nu,k}/\epsilon_{\mu,\nu,k}\}_{k\geq0}$ is decreasing for $0\leq k\leq m$ and increasing for $k\geq m$.  In this case, Lemma \ref{lem2.1} cannot be used to assess the monotonicity of the ratio $Q_{\mu,\nu}(x)=x^{-\mu}\tilde{t}_{\mu,\nu}(x)/\sinh(x/(\mu+\nu+3))$.  However, we note that the technique of \cite[Theorem 2.1 and Corollary 2.3]{yang} can be used to show that, for $-\mu-2<\nu<-5(\mu+3)/7$, there exists $x_0>0$ such that $Q_{\mu,\nu}(x)$ is strictly decreasing on $(0,x_0)$ and strictly increasing on $(x_0,\infty)$.

\end{itemize}

\end{remark}

\subsection{Tur\'{a}n type inequalities}\label{sec3.2}

\begin{theorem}\label{turanthm}Let $\Delta_{\mu,\nu}(x)=[\tilde{t}_{\mu,\nu}(x)]^2-\tilde{t}_{\mu-1,\nu-1}(x)\tilde{t}_{\mu+1,\nu+1}(x)$. 

\begin{itemize}
\item[(i)] Suppose that $\mu>-3$ and $|\nu|<\mu+3$.  Then, for $x>0$,
\begin{align}\label{turan3}0<\frac{2(\frac{1}{2}x)^{2\mu+2}}{(\mu+\nu+3)\big[\Gamma(\frac{\mu-\nu+3}{2})\Gamma(\frac{\mu+\nu+3}{2})\big]^2}\leq\Delta_{\mu,\nu}(x)\leq\frac{2[\tilde{t}_{\mu,\nu}(x)]^2}{\mu+\nu+3}.
\end{align}

\item[(ii)] For $x>0$,
\begin{align}\frac{1}{\Gamma(\frac{\mu-\nu+1}{2})\Gamma(\frac{\mu+\nu+3}{2})}\frac{2(\frac{1}{2}x)^{\mu+1}\tilde{t}_{\mu,\nu}(x)}{\nu+\frac{1}{2}+\sqrt{(\nu+\frac{1}{2})^2+x^2}}&<\Delta_{\mu,\nu}(x)<\nonumber\\
\label{turan4}&\!\!\!\!\!\!\!\!\!\!\!\!\!\!\!\!\!\!\!\!<\frac{(\mu-\nu+4)[\tilde{t}_{\mu,\nu}(x)]^2}{\mu+\frac{3}{2}+\sqrt{(\nu+\frac{3}{2})^2+x^2}},
\end{align}
where the lower bound is valid for $\mu>-1$, $0\leq\nu<\mu+1$, and the upper bound holds for $\mu>-1$, $\frac{1}{2}\leq\nu<\mu+1$.

\end{itemize}
\end{theorem}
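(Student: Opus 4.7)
My plan is to reduce $\Delta_{\mu,\nu}(x)$ to a form suitable either for the power series (\ref{tseries}) in Part~(i) or for analytic bounds on $\Phi := x\tilde{t}_{\mu,\nu}'(x)/\tilde{t}_{\mu,\nu}(x)$ in Part~(ii). Adding and subtracting (\ref{Iidentity}) and (\ref{lomrel2}) solves the recurrence pair for the shifted functions:
\[
\tilde{t}_{\mu-1,\nu-1}(x) = \tilde{t}_{\mu,\nu}'(x) + \frac{\nu}{x}\tilde{t}_{\mu,\nu}(x), \qquad
\tilde{t}_{\mu+1,\nu+1}(x) = \tilde{t}_{\mu,\nu}'(x) - \frac{\nu}{x}\tilde{t}_{\mu,\nu}(x) - a_{\mu,\nu}(x).
\]
Multiplying these, substituting into the definition of $\Delta_{\mu,\nu}$, and dividing by $[\tilde{t}_{\mu,\nu}(x)]^2$ yields the master identity
\[
\frac{\Delta_{\mu,\nu}(x)}{[\tilde{t}_{\mu,\nu}(x)]^2} = \frac{x^2+\nu^2-\Phi^2+2(\Phi+\nu)\,b_{\mu,\nu}(x)}{x^2}.
\]

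For Part~(i) I would work at the level of power-series coefficients. Using the identities $\beta_{\mu-1,\nu-1,j}=\tfrac{1}{2}(2j+s+1)\beta_{\mu,\nu,j}$ and $\beta_{\mu+1,\nu+1,j}=2\beta_{\mu,\nu,j}/(2j+s+3)$ with $s:=\mu+\nu$, each Cauchy-product coefficient of $\tilde{t}_{\mu-1,\nu-1}\,\tilde{t}_{\mu+1,\nu+1}$ becomes a rationally weighted version of the corresponding coefficient of $[\tilde{t}_{\mu,\nu}]^2$. After symmetrizing $j\leftrightarrow k-j$, the upper bound $\Delta_{\mu,\nu}\leq \tfrac{2}{\mu+\nu+3}[\tilde{t}_{\mu,\nu}]^2$ reduces to the elementary inequality
\[
\tfrac{1}{2}\!\left(\tfrac{2j+s+1}{2(k-j)+s+3}+\tfrac{2(k-j)+s+1}{2j+s+3}\right)\geq \tfrac{s+1}{s+3}, \qquad 0\leq j\leq k,
\]
whose minimum occurs at $j=k/2$ and collapses to the trivial $2k\geq 0$. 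The lower bound of~(i) says that $\Delta_{\mu,\nu}$ is bounded below by its own leading monomial, whose coefficient is precisely the stated constant $\tfrac{2}{(\mu+\nu+3)[\Gamma(\frac{\mu-\nu+3}{2})\Gamma(\frac{\mu+\nu+3}{2})]^2}$; I would establish this by the parallel coefficient comparison, with pointwise negative contributions at the extremes $j\in\{0,k\}$ offset by the concentration of the weights $\beta_{\mu,\nu,j}\beta_{\mu,\nu,k-j}$ around the middle of the range.

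For Part~(ii) I would feed a two-sided Amos-type bound on $\Phi$ into the master identity. The upper estimate (\ref{monineq4}) plugged into the numerator $x^2+\nu^2-\Phi^2+2(\Phi+\nu)b_{\mu,\nu}$ yields the lower bound in~(ii); a matching lower bound for $\Phi$, derived in the spirit of the Amos inequality for modified Bessel ratios via the continued-fraction expansion of $\tilde{t}_{\mu+1,\nu+1}/\tilde{t}_{\mu,\nu}$, yields the upper bound. The square-root expressions $\sqrt{(\nu+\tfrac{1}{2})^2+x^2}$ and $\sqrt{(\nu+\tfrac{3}{2})^2+x^2}$ arise by completing the square in $\Phi$, and the restricted ranges ($\nu\geq 0$ for the lower bound, $\nu\geq\tfrac{1}{2}$ for the upper) come from positivity constraints on the intermediate Amos estimates. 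I expect the main obstacle to be the lower bound of Part~(i) when $\mu+\nu$ is close to $-3$: the pointwise symmetrized inequality $(k-2j)^2\leq k+\mu+\nu+3$ fails at the endpoints $j\in\{0,k\}$ for large $k$, so a genuinely global weighted estimate is needed rather than a termwise comparison, and locating a clean form of that estimate will be the delicate point.
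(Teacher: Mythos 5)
Your reduction of the upper bound in (\ref{turan3}) to a termwise Cauchy-product comparison does check out: with $s=\mu+\nu$, $m=k+s+2$ and $u=2j-k$, the symmetrized weight equals $\frac{m^2+u^2-1}{(m+1)^2-u^2}$, which is minimized at $u=0$, where it equals $\frac{k+s+1}{k+s+3}\geq\frac{s+1}{s+3}$. But the lower bound of (\ref{turan3}) is exactly where your argument stops being a proof, and you concede as much: the corresponding termwise inequality $(k-2j)^2\leq k+\mu+\nu+3$ fails at $j\in\{0,k\}$ once $k\geq2$, and ``locating a clean form'' of the required global estimate is the actual content of the statement, not a detail one can defer. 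The paper sidesteps this entirely by invoking Lemma \ref{loglem} (the Kalmykov--Karp Tur\'an inequality for series in reciprocal gamma functions): writing $(\tfrac{1}{2}x)^{-\mu-1}\tilde{t}_{\mu,\nu}(x)=g\big(\tfrac{\mu+\nu+3}{2},\tfrac{1}{4}x^2\big)$ with the log-concave sequence $b_k=1/\Gamma\big(k+\tfrac{\mu-\nu+3}{2}\big)$, both the lower and upper bounds of (\ref{turan3}) fall out of a single application of that lemma. That log-concavity input is the missing idea; without it (or an equivalent global estimate) your part (i) establishes only the upper bound.

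For part (ii), your master identity $\Delta_{\mu,\nu}(x)/[\tilde{t}_{\mu,\nu}(x)]^2=\big(x^2+\nu^2-\Phi^2+2b_{\mu,\nu}(x)(\Phi+\nu)\big)/x^2$ is correct (it follows from solving (\ref{Iidentity})--(\ref{lomrel2}) for $\tilde{t}_{\mu\pm1,\nu\pm1}$ exactly as you do), and since $\Phi+\nu=x\tilde{t}_{\mu-1,\nu-1}(x)/\tilde{t}_{\mu,\nu}(x)$ and $\Phi-\nu-2b_{\mu,\nu}(x)=x\tilde{t}_{\mu+1,\nu+1}(x)/\tilde{t}_{\mu,\nu}(x)$, your route is structurally the same as the paper's, which multiplies one-sided bounds for these two ratios obtained from the two-sided inequality (\ref{upper1}) of \cite{gaunt ineq7}. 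The gap is that you never produce the bounds on $\Phi$ that you feed in. Inequality (\ref{monineq4}) alone gives $x^2+\nu^2-\Phi^2>-2(\mu+\nu+1)b_{\mu,\nu}(x)$ and hence only the positivity $\Delta_{\mu,\nu}(x)>2b_{\mu,\nu}(x)(\Phi-\mu-1)[\tilde{t}_{\mu,\nu}(x)]^2/x^2$; to reach the stated lower bound you would still need the sharp estimate $\Phi\geq\mu+\tfrac{1}{2}-\nu+\sqrt{(\nu+\tfrac{1}{2})^2+x^2}$, and your upper bound rests on an ``Amos-type, continued-fraction'' inequality that is asserted rather than derived. The paper imports precisely these as the ready-made inequality (\ref{upper1}) together with $2b_{\mu+1,\nu+1}(x)<\mu-\nu+1$ (which is also where the restrictions $\nu\geq0$, resp.\ $\nu\geq\tfrac{1}{2}$, and the constant $\mu-\nu+4$ come from); supplying such inputs with exactly these constants is the bulk of the work your sketch omits.
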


We will need the following lemma \cite[Remark 3]{kk13}.

\begin{lemma}\label{loglem}Consider the function $\delta\mapsto g(\delta,x)=\sum_{k=0}^\infty\frac{b_kx^k}{\Gamma(k+\delta)}$.
Suppose that the non-trivial and non-negative sequence $\{b_k\}_{k\geq0}$ is log-concave and has no internal zeros.  Then $\delta\mapsto g(\delta,x)$ satisfies the Tur\'{a}n type inequality
\begin{equation*}\frac{b_0^2}{\delta[\Gamma(\delta)]^2}\leq[g(\delta,x)]^2-g(\delta-1,x)g(\delta+1,x)\leq\frac{1}{\delta}[g(\delta,x)]^2, \quad x>0,\:\delta>0.
\end{equation*}
\end{lemma}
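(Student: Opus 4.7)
The plan is to expand the Turán-type difference as a power series in $x$ and reduce everything to non-negativity of coefficients. Writing $[g(\delta,x)]^2-g(\delta-1,x)g(\delta+1,x)$ as a Cauchy product and collapsing the two fractions via $\Gamma(z+1)=z\Gamma(z)$ gives
\begin{equation*}
[g(\delta,x)]^2-g(\delta-1,x)g(\delta+1,x)=\sum_{n=0}^\infty c_n(\delta)\,x^n,\qquad c_n(\delta)=\sum_{k=0}^n\frac{b_k b_{n-k}(n-2k+1)}{\Gamma(k+\delta)\Gamma(n-k+\delta+1)}.
\end{equation*}
The constant term simplifies to $c_0(\delta)=b_0^2/(\delta[\Gamma(\delta)]^2)$, matching the lower bound exactly, so the lower bound follows once $c_n(\delta)\geq 0$ is established for all $n\geq 1$, while the upper bound is equivalent to non-negativity of the coefficients of $g(\delta-1,x)g(\delta+1,x)-\tfrac{\delta-1}{\delta}[g(\delta,x)]^2$.

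The main step is the lower bound, for which I would argue by an asymmetric pairing. The summands in $c_n(\delta)$ at indices $k$ and $n+1-k$ share the common denominator $\Gamma(k+\delta)\Gamma(n-k+\delta+1)$ (since $\Gamma((n+1-k)+\delta)=\Gamma(n-k+\delta+1)$), and their sum collapses to
\begin{equation*}
\frac{(n-2k+1)\bigl[b_k b_{n-k}-b_{k-1}b_{n-k+1}\bigr]}{\Gamma(k+\delta)\Gamma(n-k+\delta+1)}.
\end{equation*}
For $1\leq k\leq\lfloor n/2\rfloor$ the factor $n-2k+1$ is positive; and the log-concavity of $\{b_k\}$, equivalently the monotonicity of $b_{j+1}/b_j$, yields $b_k b_{n-k}\geq b_{k-1}b_{n-k+1}$ whenever $k\leq(n+1)/2$, which covers this range. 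The only unpaired indices are $k=0$, whose term is manifestly non-negative, and (for $n$ odd) the self-paired $k=(n+1)/2$, whose summand vanishes because $n-2k+1=0$. The assumption that $\{b_k\}$ is non-trivial with no internal zeros ensures strict positivity is accessible in every non-trivial case.

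For the upper bound, I would expand $d_n(\delta):=[x^n]\bigl(g(\delta-1,x)g(\delta+1,x)-\tfrac{\delta-1}{\delta}[g(\delta,x)]^2\bigr)$ via the Cauchy product, reduce each summand to the common denominator $\Gamma(k+\delta)\Gamma(n-k+\delta)$ using the $\Gamma$ recursion, and then average the resulting formula with the one obtained by reindexing $k\mapsto n-k$. This collapses the numerator to a sum of squares, yielding
\begin{equation*}
d_n(\delta)=\frac{1}{2\delta}\sum_{k=0}^n\frac{b_k b_{n-k}\bigl[\delta(2k-n)^2+2k(n-k)+\delta n\bigr]}{\Gamma(k+\delta)\Gamma(n-k+\delta)(k+\delta)(n-k+\delta)}.
\end{equation*}
Since each bracketed numerator is manifestly non-negative for $\delta>0$ and $b_k\geq 0$, we obtain $d_n(\delta)\geq 0$; notice that log-concavity plays no role for this half of the inequality.

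The principal obstacle is choosing the right pairing in the lower bound. The symmetric substitution $k\leftrightarrow n-k$, which is the natural move and which works cleanly for the upper bound, produces instead the indefinite quantity $-(n-2k)^2+n+2\delta$ that cannot be controlled term-by-term. The asymmetric pairing $k\leftrightarrow n+1-k$ is the essential creative step, because it simultaneously aligns the two Gamma factors (forcing a common denominator to emerge) and reduces the combinatorics to precisely the two-index log-concavity inequality $b_k b_{n-k}\geq b_{k-1}b_{n-k+1}$, which is exactly the strength supplied by the hypothesis on $\{b_k\}$.
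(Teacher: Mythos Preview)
Your argument is correct. Both halves of the Tur\'{a}n inequality reduce, via the Cauchy product, to coefficientwise inequalities, and your computations check out: the formula $c_n(\delta)=\sum_{k=0}^n\frac{b_k b_{n-k}(n-2k+1)}{\Gamma(k+\delta)\Gamma(n-k+\delta+1)}$ follows from $\frac{1}{\Gamma(k+\delta-1)}=\frac{k+\delta-1}{\Gamma(k+\delta)}$; the asymmetric pairing $k\leftrightarrow n+1-k$ indeed produces the common denominator $\Gamma(k+\delta)\Gamma(n-k+\delta+1)$ and reduces the sign question to $b_k b_{n-k}\geq b_{k-1}b_{n-k+1}$ for $k\leq(n+1)/2$, which is precisely the log-concavity hypothesis (with the ``no internal zeros'' condition handling degenerate cases); and your symmetrised expression for $d_n(\delta)$ is correct and manifestly non-negative, with log-concavity genuinely irrelevant there.

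As for comparison with the paper: there is nothing to compare. The paper does not prove this lemma; it is quoted verbatim from Kalmykov and Karp \cite[Remark 3]{kk13} and used as a black box in the proof of Theorem~\ref{turanthm}(i). Your write-up therefore supplies a complete, self-contained argument where the paper gives only a citation. Your approach---expanding as a power series and establishing coefficientwise positivity via a pairing that exploits the $\Gamma$-recursion---is the natural one for this type of result and is in the same spirit as the original source; the asymmetric pairing you single out as the ``essential creative step'' is exactly the right idea, and your explanation of why the naive symmetric pairing fails for the lower bound is accurate and insightful.
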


\noindent{\emph{Proof of Theorem \ref{turanthm}.} (i) We apply Lemma \ref{loglem} with log-concave sequence $\{b_k\}_{k\geq0}$ defined by $b_k=1/\Gamma(k+\frac{\mu-\nu+3}{2})$, and $g(\delta+\frac{\mu'+\nu'+3}{2},\frac{1}{4}x^2)=g(\frac{(\mu'+\delta)+(\nu'+\delta)+3}{2},\frac{1}{4}x^2)=(\frac{1}{2}x)^{-\mu'-\delta-1}\tilde{t}_{\mu'+\delta,\nu'+\delta}(x)$ to obtain
\begin{align*}&\frac{2(\frac{1}{2}x)^{2\mu'+2\delta+2}}{((\mu'+\delta)+(\nu'+\delta)+3)\big[\Gamma(\frac{\mu'-\nu'+3}{2})\Gamma(\frac{(\mu'+\delta)+(\nu'+\delta)+3}{2})\big]^2}\\
&\quad\quad\quad\quad\quad\quad\quad\quad\quad\quad\quad\quad\leq\Delta_{\mu',\nu'}(x)\leq\frac{2[\tilde{t}_{\mu'+\delta,\nu'+\delta}(x)]^2}{(\mu'+\delta)+(\nu'+\delta)+3}.
\end{align*}
Now let $\mu=\mu'+\delta$ and $\nu=\nu'+\delta$.

\vspace{2mm}

\noindent{(ii)} We begin by noting the following two-sided inequality of \cite{gaunt ineq7}:
\begin{equation}\label{upper1}\frac{x}{\nu-\frac{1}{2}+2b_{\mu,\nu}(x)+\sqrt{\big(\nu+\frac{1}{2}\big)^2+x^2}}<\frac{\tilde{t}_{\mu,\nu}(x)}{\tilde{t}_{\mu-1,\nu-1}(x)}<\frac{x}{\nu-\frac{1}{2}+\sqrt{\big(\nu-\frac{1}{2}\big)^2+x^2}},
\end{equation}
where the lower bound is valid for $\mu>-1$, $0\leq\nu<\mu+1$ and the upper bound is valid for $\mu>-\frac{1}{2}$, $\frac{1}{2}\leq\nu<\mu+1$.  From (\ref{upper1}) it follows that, for $\mu>-1$, $0\leq\nu<\mu+1$,
\begin{align}\label{turan1}\frac{\tilde{t}_{\mu-1,\nu-1}(x)}{\tilde{t}_{\mu,\nu}(x)}\frac{\tilde{t}_{\mu+1,\nu+1}(x)}{\tilde{t}_{\mu,\nu}(x)}<\frac{\nu-\frac{1}{2}+2b_{\mu,\nu}(x)+\sqrt{\big(\nu+\frac{1}{2}\big)^2+x^2}}{\nu+\frac{1}{2}+\sqrt{\big(\nu+\frac{1}{2}\big)^2+x^2}},
\end{align}
and, for $\mu>-1$, $\frac{1}{2}\leq\nu<\mu+1$,
\begin{align}\label{turan2}\frac{\tilde{t}_{\mu-1,\nu-1}(x)}{\tilde{t}_{\mu,\nu}(x)}\frac{\tilde{t}_{\mu+1,\nu+1}(x)}{\tilde{t}_{\mu,\nu}(x)}>\frac{\nu-\frac{1}{2}+\sqrt{\big(\nu-\frac{1}{2}\big)^2+x^2}}{\nu+\frac{1}{2}+2b_{\mu+1,\nu+1}(x)+\sqrt{\big(\nu+\frac{3}{2}\big)^2+x^2}}.
\end{align}
Rearranging (\ref{turan1}) gives
\[\Delta_{\mu,\nu}(x)>\frac{2b_{\mu,\nu}(x)[\tilde{t}_{\mu,\nu}(x)]^2}{\nu+\frac{1}{2}+\sqrt{(\nu+\frac{1}{2})^2+x^2}},\]
as required. On the other hand, first using the inequality $2b_{\mu+1,\nu+1}(x)<\mu-\nu+1$ (see \cite{gaunt ineq7}, part (iii) of Lemma 3.3), and then rearranging (\ref{turan2}) gives
\begin{align*}\frac{\Delta_{\mu,\nu}(x)}{[\tilde{t}_{\mu,\nu}(x)]^2}<\frac{\mu-\nu+2+\sqrt{(\nu+\frac{3}{2})^2+x^2}-\sqrt{(\nu-\frac{1}{2})^2+x^2}}{\mu+\frac{3}{2}+\sqrt{(\nu+\frac{3}{2})^2+x^2}}<\frac{\mu-\nu+4}{\mu+\frac{3}{2}+\sqrt{(\nu+\frac{3}{2})^2+x^2}},
\end{align*}
as required. \hfill $\Box$

\begin{remark}
\begin{itemize}

\item [(i)] The two-sided inequality (\ref{turan3}) generalises a two-sided Tur\'{a}n type inequality of \cite[p$.$ 903]{bp14} for the modified Struve function $\mathbf{L}_\nu(x)$, but there is no modified Struve analogue of the double inequality (\ref{turan4}).  It should be noted that several other bounds for the ratio $\tilde{t}_{\mu,\nu}(x)/\tilde{t}_{\mu-1,\nu-1}(x)$ are given in \cite{gaunt ineq7}, and proceeding similarly to the proof of part (ii) of Theorem \ref{turanthm} with these bounds would lead to alternative bounds for $\Delta_{\mu,\nu}(x)$.

Using (\ref{ttend0}) one can compute $\lim_{x\downarrow0}\Delta_{\mu,\nu}(x)/\tilde{t}_{\mu,\nu}(x)=2/(\mu+\nu+3)$, and so the upper bound in (\ref{turan3}) is sharp as $x\downarrow0$.  None of the other bounds are sharp in this limit, but they are all of the correct asymptotic order $O(x^{2\mu+2})$.  Also, using (\ref{Itendinfinity}) we have that $\Delta_{\mu,\nu}(x)\sim\frac{1}{2\pi x^2}\mathrm{e}^{2x}$, as $x\rightarrow\infty$.  Since, $\tilde{t}_{\mu,\nu}(x)\sim\frac{1}{\sqrt{2\pi x}}\mathrm{e}^{x}$, as $x\rightarrow\infty$, we find that the upper bound in (\ref{turan4}) is of the correct asymptotic order, but not sharp, in this limit.  All other bounds in the theorem are, however, not of the correct asymptotic order.

\item [(ii)] The upper bound in inequality (\ref{turan3}) also complements two one-sided Tur\'{a}n type inequalities of \cite{m17}, which were given for the normalized modified Lommel function
\begin{align*}\lambda_{\mu,\nu}(x)&=(\mu-\nu+1)(\mu+\nu+1)x^{-\mu-1}t_{\mu,\nu}(x) \\
&=2^{\mu+1}\Gamma\big(\tfrac{\mu-\nu+3}{2}\big)\Gamma\big(\tfrac{\mu+\nu+3}{2}\big)x^{-\mu-1}\tilde{t}_{\mu,\nu}(x).
\end{align*}
% $\lambda_{\mu,\nu}(x)=(\mu-\nu+1)(\mu+\nu+1)x^{-\mu-1}t_{\mu,\nu}(x)$.  
For $x>0$,
\begin{align*}[\lambda_{\mu,\nu}(x)]^2-\lambda_{\mu-1,\nu}(x)\lambda_{\mu+1,\nu}(x)&\leq 0, \quad \mu>0, \:|\nu|<\mu+3,\\
[\lambda_{\mu,\nu}(x)]^2-\lambda_{\mu,\nu-1}(x)\lambda_{\mu,\nu-1}(x)&\leq 0, \quad \mu>-1, \:|\nu|<\mu+3,
\end{align*}
which when expressed in terms of $\tilde{t}_{\mu,\nu}(x)$ read
\begin{align}\label{turan5}[\tilde{t}_{\mu,\nu}(x)]^2-\tilde{t}_{\mu-1,\nu}(x)\tilde{t}_{\mu+1,\nu}(x)&\leq A_{\mu,\nu}[\tilde{t}_{\mu,\nu}(x)]^2,\\
\label{turan6}[\tilde{t}_{\mu,\nu}(x)]^2-\tilde{t}_{\mu,\nu-1}(x)\tilde{t}_{\mu,\nu+1}(x)&\leq A_{\mu,\nu}[\tilde{t}_{\mu,\nu}(x)]^2,
\end{align}
where
\[A_{\mu,\nu}=1-\frac{\big[\Gamma(\frac{\mu-\nu+3}{2})\Gamma(\frac{\mu+\nu+3}{2})\big]^2}{\Gamma(\frac{\mu-\nu+2}{2})\Gamma(\frac{\mu+\nu+2}{2})\Gamma(\frac{\mu-\nu+4}{2})\Gamma(\frac{\mu+\nu+4}{2})}.\]
A similar asymptotic analysis to the one carried out in part (i) show that (\ref{turan5}) and (\ref{turan6}) are tight in the limit $x\downarrow0$, as is the case for the upper bound of inequality (\ref{turan3}).  It remains an open problem, however, to obtain lower bounds for $[\tilde{t}_{\mu,\nu}(x)]^2-\tilde{t}_{\mu-1,\nu}(x)\tilde{t}_{\mu+1,\nu}(x)$ and $[\tilde{t}_{\mu,\nu}(x)]^2-\tilde{t}_{\mu,\nu-1}(x)\tilde{t}_{\mu,\nu+1}(x)$.

\end{itemize}
\end{remark}

%\section{Further results}\label{sec4}

\subsection*{Acknowledgements}
The author is supported by a Dame Kathleen Ollerenshaw Research Fellowship.  

\footnotesize

\end{document}